\documentclass[11pt,reqno]{amsart}
\usepackage[colorlinks=true,allcolors=blue,backref=page]{hyperref}
\usepackage{color}
\usepackage{amsmath, amssymb, amsthm}

\usepackage{mathrsfs}
\usepackage{mathtools}
\usepackage[noabbrev,capitalize,nameinlink]{cleveref}
\crefname{equation}{}{}
\usepackage{fullpage}
\usepackage[noadjust]{cite}
\usepackage{graphics}
\usepackage{pifont}
\usepackage{tikz}
\usepackage{tikz-cd}
\usepackage{bbm}
\usepackage[T1]{fontenc}

\usetikzlibrary{arrows.meta}

\usepackage{environ}
\usepackage{framed}
\usepackage{url}
\usepackage[linesnumbered,ruled,vlined]{algorithm2e}
\usepackage[noend]{algpseudocode}
\usepackage[labelfont=bf]{caption}
\usepackage{cite}
\usepackage{framed}
\usepackage[framemethod=tikz]{mdframed}
\usepackage{appendix}
\usepackage{graphicx}
\usepackage[textsize=tiny]{todonotes}
\usepackage{tcolorbox}
\usepackage{enumerate}
\usepackage[shortlabels]{enumitem}
\allowdisplaybreaks[1]

\apptocmd{\sloppy}{\hbadness 10000\relax}{}{} 

\crefname{algocf}{Algorithm}{Algorithms}

\crefname{equation}{}{} 
\crefname{conjecture}{Conjecture}{Conjectures} 
\AtBeginEnvironment{appendices}{\crefalias{section}{appendix}} 

\crefformat{enumi}{#2#1#3}
\crefrangeformat{enumi}{#3#1#4 to~#5#2#6}
\crefmultiformat{enumi}{#2#1#3}%
{ and~#2#1#3}{, #2#1#3}{ and~#2#1#3}

\usepackage[final]{showkeys} 

\colorlet{refkey}{orange!20}
\colorlet{labelkey}{blue!30}

\crefname{algocf}{Algorithm}{Algorithms}

\numberwithin{equation}{section}
\newtheorem{theorem}{Theorem}[section]
\newtheorem{proposition}[theorem]{Proposition}
\newtheorem{lemma}[theorem]{Lemma}

\crefname{claim}{Claim}{Claims}

\newtheorem*{question*}{Question}

\theoremstyle{definition}
\newtheorem{definition}[theorem]{Definition}

\newtheorem*{definition*}{Definition}

\theoremstyle{remark}
\newtheorem*{remark}{Remark}

\newcommand{\eps}{\varepsilon}

\newcommand\poly{\operatorname{poly}}

\renewcommand{\le}{\leqslant}
\renewcommand{\ge}{\geqslant}

\newcommand\Z{\mathbf{Z}}
\newcommand\Q{\mathbf{Q}}
\newcommand\C{\mathbf{C}}
\newcommand\R{\mathbf{R}}
\newcommand\N{\mathbf{N}}

\newcommand\id{\operatorname{id}}

\allowdisplaybreaks

\newcommand{\md}[1]{\ensuremath{(\operatorname{mod}\, #1)}}

\usepackage{graphicx}

\title{On Alweiss's example for multiple recurrence}
\author[A1]{Ben Green}
\address{Mathematical Institute, Andrew Wiles Building, Radcliffe Observatory Quarter, Woodstock Rd, Oxford OX2 6QW, UK}
\email{ben.green@maths.ox.ac.uk}

\begin{document}

\begin{abstract}
In this expository note we discuss the example of R.~Alweiss giving a construction of a set $S \subset \N$ which intersects every nil-Bohr set, but which is not a set of $2$-recurrence. 
\end{abstract}

\maketitle
\setcounter{tocdepth}{1}

\section{Introduction}

In this note, each nilmanifold $G/\Gamma$ is assumed to be endowed with a metric $d_{G/\Gamma}(x,y) = \inf_{\gamma \in \Gamma} d_G(x, y \gamma)$ for some right-invariant metric $d_G$ on the simply-connected nilpotent group $G$. Such a metric $d_G$ can be described in some specified way with reference to a Mal'cev basis for the Lie algebra $\mathfrak{g}$, which we assume to have rational structure constants. (See \cite{GT-nil} for details of one construction.) By a \emph{nil-Bohr set} we mean a set of the form 
\begin{equation}\label{nil-Bohr} B = \{ n \in \N : d_{G/\Gamma}(g^n, \id_G) \le \eps\}\end{equation} for some $\eps > 0$, where $\id_G$ denotes the identity of $G$. Our aim is to describe a construction of Alweiss \cite{alweiss}, which establishes the following theorem. 

\begin{theorem}\label{mainthm}
There exists a set $S \subset \N$ such that the following are true:
\begin{enumerate}
    \item $S$ intersects every nil-Bohr set;
    \item There is a finite colouring of $\N$ such that there is no monochromatic 3-term progression $x, x+d, x+2d$ with $d \in S$.
\end{enumerate}
\end{theorem}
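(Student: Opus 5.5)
The plan is to build $S=\{d_1<d_2<\cdots\}$ inductively, together with a single real $\theta=\sum_k\theta_k$, and then colour by a quadratic phase. The colouring is $c(x)=\lfloor 100\{x^2\theta\}\rfloor\in\{0,\dots,99\}$. Since $x^2-2(x+d)^2+(x+2d)^2=2d^2$, a monochromatic progression $x,x+d,x+2d$ forces $\|2d^2\theta\|_{\R/\Z}\le 4/100$. So (2) follows once we arrange $\|2d_j^2\theta\|\ge 1/5$ for every $j$. (1) is arranged by placing $d_j$ inside the $j$-th member of a suitable countable family of nil-Bohr sets. The point is that $\theta$ is never fixed in advance: each new frequency $\theta_j$ is chosen \emph{after} $d_j$. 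This is why $S$ escapes the quadratic Bohr sets $\{d:\|d^2\theta\|\text{ small}\}$, which are themselves nil-Bohr sets.

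\textbf{Induction.} Suppose $d_1,\dots,d_{j-1}$ and rationals $\theta_1,\dots,\theta_{j-1}$ are chosen, and let $M_{j-1}$ be the lcm of the denominators of the $\theta_k$. Take $d_j\in B_j\cap M_{j-1}\N$ with $d_j$ huge compared with $d_{j-1}$. Then set $\theta_j=1/(8d_j^2)$, so that $2d_j^2\theta_j=1/4$. For $k<j$ we have $2d_j^2\theta_k\in\Z$, because $M_{j-1}\mid d_j$. For $k>j$ we have $2d_j^2\theta_k=d_j^2/(4d_k^2)$. Provided $d_k\ge 10^k d_{k-1}$, these tails sum to less than $1/100$. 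Hence $\|2d_j^2\theta\|\ge 1/4-1/100$, as required. Two points need checking.
\begin{itemize}
\item[(a)] $B\cap M\N$ contains arbitrarily large elements for every nil-Bohr set $B$ and every $M$. This holds because $\{n: d_{G/\Gamma}(g^{Mn},\id_G)\le\eps\}$ is again a nil-Bohr set (for $g^M$). Nil-Bohr sets are infinite by recurrence in the nilsystem $(G/\Gamma,g)$, which is distal, so the orbit of $\id_G\Gamma$ returns arbitrarily close infinitely often. Largeness can also be forced by intersecting with a further condition.
\item[(b)] There is a countable list $B_1,B_2,\dots$ of nil-Bohr sets such that it suffices to meet each $B_j$ (possibly listed with repetitions, so each is revisited at arbitrarily large scales).
\end{itemize}

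\textbf{Main obstacle: (b).} There are only countably many nilmanifolds $G/\Gamma$ with rational Mal'cev data, and countably many $\eps=1/m$. However, $g$ ranges over the uncountable group $G$, and $g^n\Gamma$ is not uniformly continuous in $g$ for all $n$. I would handle this with a \emph{uniform} quantitative recurrence bound. Fix $G/\Gamma$, $\eps$, a lower bound $L$ and a modulus $M$. I claim there is $N=N(G/\Gamma,\eps,L,M)$ such that every $g$ has some $n\in M\N\cap[L,N]$ with $d_{G/\Gamma}(g^n,\id_G)\le\eps$. I would prove this by compactness: argue by contradiction with a sequence $g_i$, reduce each $g_i$ modulo $\Gamma$ into a compact fundamental domain, and use recurrence of the limit element together with continuity of $g\mapsto g^n$ for each fixed $n$. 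Alternatively, one can try a pigeonhole argument on an $\eps$-net of $G/\Gamma$, though right-invariance makes the difference step less direct.

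Given this bound, only the finitely many times $n\le N$ matter. On a compact set of $g$, the map $g\mapsto(g^n\Gamma)_{n\le N}$ is uniformly continuous. So a finite $\eps/2$-net $g^{(1)},\dots,g^{(r)}$ suffices: any $d_j$ in the finite intersection $\bigcap_i\{n\le N: d(g^{(i)n},\id_G)\le\eps/2\}\cap M\N\cap[L,N]$ lies in every such $B$. This finite intersection is again a nil-Bohr set, realised on the product $\prod_i G/\Gamma$, so it is nonempty by the same bound (taken with $\eps/2$ and product complexity). Enumerating the countably many triples $(G/\Gamma,1/m,\text{stage})$ then gives the list in (b). Note that $M=M_{j-1}$ is fixed at stage $j$ before $d_j$ is chosen, so there is no circularity. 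I expect this uniformity step to be where the real work lies; the colouring part above is routine.
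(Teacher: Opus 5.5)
The colouring half of your proposal is correct, but the approach cannot give (1). The obstruction is the one you flag yourself: quadratic Bohr sets contain nil-Bohr sets. Property (1) has to hold for the final $S$ against every nil-Bohr set, including those built from the final $\theta=\sum_k\theta_k$. Choosing $\theta_j$ after $d_j$ does not exempt $S$ from these.

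Concretely, write $[x,y,z]$ for the Heisenberg element with group law $[x,y,z][x',y',z']=[x+x',y+y',z+z'+xy']$, and let $\Gamma$ be the integer points. For $g=[\theta,2,\theta]$ one gets $g^n=[n\theta,2n,n^2\theta]$ and $g^n[p,q,r]=[n\theta+p,\,2n+q,\,n^2\theta+r+n\theta q]$. If this is close to $\id_G$ for some $[p,q,r]\in\Gamma$, then $q=-2n$, so $r-n^2\theta$ is small. Hence for small $\eps'$ the nil-Bohr set $\{n:d_{G/\Gamma}(g^n,\id_G)\le\eps'\}$ lies inside $\{n:\Vert 2n^2\theta\Vert<1/5\}$, which your $S$ avoids by construction. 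So no sequence $(d_j)$ can satisfy both $\Vert 2d_j^2\theta\Vert\ge 1/5$ and (1): a colouring by a single quadratic phase is always seen by some nilmanifold.

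It follows that your claim (b) is false, and it breaks at the net step.
\begin{itemize}
\item[(i)] To approximate $g\mapsto(g^n\Gamma)_{n\le N}$ uniformly to within $\eps/2$, the net size $r$ must grow with $N$.
\item[(ii)] The recurrence bound on $(G/\Gamma)^r$ only gives a common return time in $[L,N']$ with $N'=N((G/\Gamma)^r,\eps/2,L,M)$, which is generally larger than $N$.
\item[(iii)] On $[L,N']$ the net no longer controls the orbits, so the argument is circular.
\end{itemize}
This cannot be repaired. Already for $G/\Gamma=\R/\Z$, no integer $d\ge1$ lies in $\{n:\Vert n\alpha\Vert\le\eps\}$ for every $\alpha$ (take $\alpha=1/2d$). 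So no single $d_j$ can serve all $g$ at a given stage, and $S$ must contain many elements at every scale.

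Separately, replacing $g$ by $g\gamma$ does not preserve $g^n\Gamma$ when $G$ is nonabelian. This is why the paper works with bounded polynomial sequences, via \cref{poly-compact}.

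The paper's construction is built around these points.
\begin{itemize}
\item[(a)] $S$ is a rich Bohr-type set cut out by infinitely many frequencies: one $\alpha_i d$ lies near $\tfrac12$, and all the others are small with $\sum_{j\ne i}\Vert\alpha_j d\Vert<4$.
\item[(b)] The colouring uses $f(n)=\sum_i(e(\alpha_i n)-1)$, built from infinitely many independent frequencies rather than one polynomial phase.
\item[(c)] Property (1) comes from the uniform-in-$g$ joint density of $(n\alpha_1,\dots,n\alpha_m,g^n)_{n\le N_m}$ in \cref{prop32}, combined with the dimension count for $H$ and the pigeonhole argument in $\R^I$.
\end{itemize}
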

\begin{remark} The paper \cite{alweiss} passes to an equivalent notion of nil-Bohr set defined using bracket polynomials. Working with nilmanifolds throughout makes the main ideas of the argument clearer and avoids any appeal to the literature connecting nilmanifolds and bracket polynomials.  \end{remark}

\emph{Acknowledgements and AI use.} I thank Dan Altman and Mehtaab Sawhney for helpful remarks and Ryan Alweiss for correspondence about his paper. 

In the main part of the note, only very minor use was made of AI tools. However, I used ChatGPT 5.6 Pro to prove \cref{prop41}, which shows that the set $S$ \emph{is} a set of recurrence.\vspace*{8pt}

\emph{Notation.} We write $\Vert x \Vert$ for the distance from $x$ to the nearest integer. Set $\delta := \frac{1}{200}$.

\section{The construction and the colouring}

We now describe the construction of $S$ and give the colouring satisfying \cref{mainthm}.  We will choose real numbers $1 > \alpha_1 > \alpha_2 > \cdots > 0$ and positive integers $N_1 < N_2 < \cdots$. They will be chosen in the order $\alpha_1, N_1, \alpha_2, N_2,\dots$, making sure that $1/\alpha_i$ or $N_i$ is sufficiently large with respect to preceding parameters. The precise constraints necessary will be specified near the beginning of \cref{section3}. We will certainly choose $\alpha_{i+1} < \frac{1}{2}\alpha_i$, and so $\sum_{i = 1}^{\infty} \alpha_i$ converges. 
\begin{definition}\label{s-def} Define $S$ to be the set of all $n \in \N$ such that for some (unique) value of $i$ we have
\begin{enumerate}
    \item $\Vert \alpha_i n - \frac{1}{2} \Vert \le \delta$;
    \item $\Vert \alpha_j n \Vert \le \delta$ for $j \ne i$;
    \item $\sum_{j \ne i} \Vert \alpha_j n \Vert < 4$. 
\end{enumerate}
\end{definition}
Now we define the colouring. Set $f(n) := \sum_{i = 1}^{\infty} ( e(\alpha_i n) - 1) \in \C$. Note that due to the inequality $|e(x) - 1| \le 2\pi |x|$ and the convergence of $\sum_{i=1}^{\infty} \alpha_i$, $f(n)$ is well-defined for every $n$.

Set $K := 8$ and divide the torus $\C/8\Z[i]$ into $64K^2$ squares of sidelength $1/K$ in the obvious way, and colour $\N$ according to which square $f(n) \md{8\Z[i]}$ lies in. 
Suppose, as a hypothesis for contradiction, that we have a monochromatic 3-term arithmetic progression $x, x+d, x+2d$ with $d \in S$. Let $i$ be the integer associated to $d$ as in \cref{s-def}. We have
\begin{align}\nonumber f(x) - 2f(x + d) + f(x + 2d) & = e(\alpha_i x) - 2 e(\alpha_i(x + d)) + e(\alpha_i(x + 2d)) \\ & + \sum_{j \ne i} \big( e(\alpha_j x) - 2 e(\alpha_j(x + d)) + e(\alpha_j(x + 2d)) \big). \label{f-expand}\end{align} 
The first term here is a complex number $z$ with $2 \le |z| \le 4$. The upper bound is obvious; to see the lower bound, note that  
\[ |z| =\big| 1 - 2 e(\alpha_i d) + e(2 \alpha_i d) \big|  =  4 \sin^2 (\pi \alpha_i d) \ge 4 \cos^2 (\pi \delta) \ge 2,\] where in the penultimate step we used \cref{s-def} (1). The sum over $j \ne i$ in \cref{f-expand} is a complex number $w$ with 
\[ |w| \le 4 \sum_{j \ne i} \sin^2 (\pi \alpha_j d) \le 4\pi^2 \sum_{j \ne i} \Vert \alpha_j d \Vert^2 \le 16\pi^2 \delta \le 1, \] where in the last step we used \cref{s-def} (2) and (3) (and the fact that $\delta = \frac{1}{200}$). It follows from these observations, \cref{f-expand}, and the triangle inequality that 
\begin{equation}\label{sandwich-f} 1 \le \big| f(x) - 2f(x + d) + f(x + 2d)\big| \le 5. \end{equation}
However, since $x, x+d, x+2d$ are all the same colour, $f(x) - 2f(x +d) + f(x + 2d)$ is at (Euclidean) distance at most $4\sqrt{2}/K < 1$ from the lattice $8\Z[i]$. This is incompatible with \cref{sandwich-f} and so we have a contradiction.

\section{$S$ is a set of nil-Bohr recurrence}\label{section3}

We now turn to the proof that $S$ is a set of nil-Bohr recurrence, or in other words that \cref{mainthm} (1) holds true. The set of all nilmanifolds $G/\Gamma$ (together with metric $d_{G/\Gamma}$) is countable, being determined by dimension, step and the rational structure constants of the Mal'cev basis. List them in some arbitrary fashion and denote by $r(G/\Gamma)$ the index of a given $G/\Gamma$ in this listing. 

Recall the notion of a \emph{polynomial sequence} $(g(n))_{n \in \N}$ adapted to the lower central series filtration on $G$ (see for instance \cite[Definition 1.8]{GT-nil}). Each `linear' sequence $(g^n)_{n \in \N}$ is an example of a polynomial sequence. We will need two facts from nilmanifold theory. The first is the following compactness property of the polynomial sequences relative to a nilmanifold. 

\begin{lemma}\label{poly-compact}
Suppose that $G/\Gamma$ is an $s$-step nilmanifold and that $g(n)$ is a polynomial sequence on $G$ with $g(0) = \id_G$. Then $g(n)$ has a unique Taylor expansion $g(n) =   g_1^{\binom{n}{1}}\cdots g_s^{\binom{n}{s}}$ with $g_i \in G_i$, the $i$th group in the lower central series of $G$. Moreover there exists $\tilde g(n)$ such that $\tilde g(n)\Gamma = g(n) \Gamma$ for all $n$, and such that the Taylor coefficients $\tilde g_i$ all satisfy $d_{G}(\tilde g_i, \id_G) = O_{G/\Gamma}(1)$, uniformly in $g$.
\end{lemma}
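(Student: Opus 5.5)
We would prove the two parts of \cref{poly-compact} separately, working throughout in Mal'cev coordinates.

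\emph{Existence and uniqueness of the Taylor expansion.} We induct on the step $s$. Since $g$ is a polynomial sequence adapted to the lower central series and $g(0)=\id_G$, the first derivative $\partial_1 g(n) = g(n+1)g(n)^{-1}$ is a polynomial sequence adapted to the shifted filtration $(G_{i+1})$. We set $g_1 := g(1) \in G_1 = G$ and look at $h(n) := g_1^{-\binom{n}{1}} g(n)$. This satisfies $h(0)=h(1)=\id_G$. Polynomial sequences adapted to a filtration form a group (Lazard--Leibman), and a polynomial sequence vanishing at $0,1$ with respect to $G_\bullet$ takes values in $G_2$ and is adapted to the filtration $G_2 \supseteq G_2 \supseteq G_3 \supseteq \cdots$. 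Iterating, we peel off $g_2 := h(2)$-type coefficients in $G_2$, then $G_3$, and so on, until we reach $G_{s+1}=\{\id\}$. For uniqueness, suppose two expansions agree. We compare them modulo $G_2$, which is an abelian group where $n \mapsto g_1^n$ is injective on coefficients; this forces equality of $g_1$. We then cancel $g_1$ and repeat modulo $G_3$, and so on. The only technical input here is the standard fact that, in coordinates of the second kind, a $G_\bullet$-polynomial sequence has coordinates that are polynomials of bounded degree; that fact makes all of this routine.

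\emph{Bounded representatives.} We fix a fundamental domain $F$ for $G/\Gamma$, for instance the Mal'cev coordinates in $[0,1)^{\dim G}$, which is bounded. We then proceed inductively in $i$, replacing $g$ by $g\gamma(n)$ where $\gamma(n)$ is a polynomial sequence in $\Gamma$. Right-multiplying by $\Gamma$-valued polynomial sequences does not change $g(n)\Gamma$, and such sequences form a group. At stage $i$ we may assume $g_1,\dots,g_{i-1}$ are already bounded. We write $g_i = \{g_i\}[g_i]$ with $[g_i] \in \Gamma_i := \Gamma\cap G_i$ and $\{g_i\}$ lying in a bounded fundamental domain of $G_i/\Gamma_i$; this uses cocompactness and rationality of $\Gamma_i$ in $G_i$. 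We then want to move $[g_i]^{\binom{n}{i}}$ to the far right. Commuting it past $g_{i+1}^{\binom{n}{i+1}}\cdots g_s^{\binom ns}$ produces commutator corrections, and these form polynomial sequences in the higher groups $G_{i+1},\dots$. After the move we have $g(n)\Gamma = g'(n)\Gamma$, where $g'$ has Taylor coefficients $g_1,\dots,g_{i-1},\{g_i\}$ and new, possibly unbounded, higher coefficients. The higher coefficients are then handled at later stages.

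\emph{Main obstacle.} The delicate point is the rationality bookkeeping in this last step. We need $[g_i]^{\binom{n}{i}}$ to lie in $\Gamma$ for all $n$, which is immediate since $[g_i]\in\Gamma$ and $\binom ni\in\Z$. But we also need the reordering to keep us inside right-$\Gamma$-cosets. To get this, we would instead write $g(n) = g'(n)\gamma(n)$ with $\gamma(n)=[g_i]^{\binom ni}$ times higher corrections, and use that $\Gamma$-valued sequences are closed under products. The higher corrections need not lie in $\Gamma$. To handle this, we would first pass to a finite-index lattice $\Gamma'\subseteq\Gamma$ for which the Mal'cev basis is "adapted" (the standard trick of \cite{GT-nil}), and note that $\Gamma'$-valued polynomial sequences form a group. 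The bound obtained depends only on the finitely many fundamental domains for $G_i/\Gamma_i$, which gives $O_{G/\Gamma}(1)$ uniformly in $g$.
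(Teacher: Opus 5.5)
Your overall strategy matches the paper's: bound the Taylor coefficients $g_1, g_2, \dots$ in turn by splitting $g_i = \{g_i\}[g_i]$ with the lattice part on the right, and pass to a $\Gamma$-equivalent polynomial sequence. However, the step you single out as the main obstacle is resolved incorrectly. You place the commutator corrections inside $\gamma(n)$, note that they need not lie in $\Gamma$, and propose to fix this by passing to a finite-index sublattice $\Gamma' \subseteq \Gamma$. That cannot work. An element outside $\Gamma$ lies outside every subgroup of $\Gamma$. Also, $\Gamma'$-valued sequences form a group for exactly the same trivial reason that $\Gamma$-valued ones do, so nothing is gained. You also pass over a second correction of the same kind, coming from $(\{g_i\}[g_i])^{\binom{n}{i}} \neq \{g_i\}^{\binom{n}{i}}[g_i]^{\binom{n}{i}}$; compare the identity $(ab)^n = a^n b^n [a,b]^{-\binom{n}{2}}$ in the paper's sketch.

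The missing observation is that no correction ever needs to go into $\gamma$.
\begin{itemize}
\item Take $\gamma(n) := [g_i]^{\binom{n}{i}}$ exactly, which is a $\Gamma$-valued polynomial sequence, and define $g'(n) := g(n)\gamma(n)^{-1}$. By Lazard--Leibman, $g'$ is a polynomial sequence with $g'(0) = \id_G$ and $g'(n)\Gamma = g(n)\Gamma$.
\item Its Taylor coefficients can be read off recursively from its values, since $g(j) = g_1^{j} g_2^{\binom{j}{2}} \cdots g_{j-1}^{j} g_j$.
\item Because $g'(n) = g(n)$ for $n < i$ and $g'(i) = g(i)[g_i]^{-1}$, you get $g'_j = g_j$ for $j < i$ and $g'_i = g_i [g_i]^{-1} = \{g_i\}$.
\end{itemize}
All commutator corrections are thereby absorbed automatically into the new coefficients $g'_{i+1}, \dots, g'_s$. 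This is exactly what happens in the paper's $2$-step computation, where $[\tilde g_1, \gamma_1]^{-1}$ is absorbed into $g_2'$ rather than into the lattice part. With this change, your induction and uniform bound go through with no change of lattice.

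The same recursion also repairs your uniqueness argument. Comparing modulo $G_2$ only gives $g_1 G_2 = g_1' G_2$, so you cannot then cancel $g_1$ exactly. Evaluating at $n = 1, 2, \dots, s$ instead determines each $g_j$ exactly.
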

\begin{proof} For the existence of the Taylor expansion, see \cite[Lemma 6.7]{GT-nil}. For the boundedness statement see, for example, \cite[Lemma C.1]{GTZ-uk}. The proof is by iteratively passing to $\Gamma$-equivalent polynomial sequences with bounded $g_1, g_2,\cdots$ in this order. In the 2-step case we can be quite explicit via use of the relation $(ab)^n = a^n b^n [a,b]^{-\binom{n}{2}}$; the first step of the process is then to replace $g_1^n g_2^{\binom{n}{2}}$ by $\tilde g_1^n (g'_2)^{\binom{n}{2}}$ where $g'_2 := g_2 [\tilde g_1, \gamma_1]^{-1}$, and $g_1 = \tilde g_1 \gamma_1$ with $\tilde g_1$ bounded and $\gamma_1 \in \Gamma$.\end{proof}

The Taylor expansion allows us to define a metric on the space of polynomial sequences on a given $G$, defined by $d_{\poly, G}(g, g') := \max_i d_G(g_i, g'_i)$. Each evaluation map $g \mapsto g(n)$ is continuous in this metric.

The second result we need is the following theorem of Leibman \cite{leibman-2}.

\begin{proposition}\label{leibman-prop} Suppose that $(g(n))_{n \in \N}$ is a polynomial sequence in $G$ with $g(0) = \id_G$. Then we have a factorization of polynomial sequences $g(n) = g'(n) \gamma(n)$, where $(g'(n)\Gamma)_{n \in \N}$ is totally equidistributed in some subnilmanifold $G'/\Gamma'$, and $(\gamma(n)\Gamma)_{n \in \N}$ is periodic with $\gamma(0) = \id_G$. In particular, there is an infinite subprogression $P = \{d, 2d,\cdots\}$ such that $\overline{(g(n)\Gamma)_{n \in P}} = G'/\Gamma'$.
\end{proposition}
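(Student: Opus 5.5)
The route I propose is to take the factorization statement essentially as a black box from Leibman \cite{leibman-2}, recalling briefly why it holds, and then to derive the ``in particular'' clause directly. The derivation of the last clause is the only part needed later in the note.

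For the factorization itself I would argue by induction on $\dim G$ (or on the dimension of the smallest rational subgroup containing the sequence). The input is Leibman's equidistribution criterion. A polynomial sequence $g(n)\Gamma$ fails to be totally equidistributed in $G/\Gamma$ exactly when there is a nontrivial horizontal character $\psi : G \to \R/\Z$, trivial on $[G,G]$ and on $\Gamma$, such that $\psi(g(n))$ is a polynomial with rational non-constant coefficients, or is constant modulo $\Z$.
\begin{enumerate}
    \item If no such $\psi$ exists, take $g' = g$, $G' = G$ and $\gamma \equiv \id_G$.
    \item Otherwise, write $\psi(g(n)) = \psi(\gamma_0(n))$ for some rational polynomial sequence $\gamma_0(n)$ built from rational elements of $G$. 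Rationality makes $\gamma_0(n)\Gamma$ periodic, and we normalise $\gamma_0(0) = \id_G$.
    \item Then $g(n)\gamma_0(n)^{-1}$ takes values in the rational subgroup $\ker\psi$, after a bounded conjugation adjustment as in \cref{poly-compact}. Now apply induction inside $\ker\psi$.
\end{enumerate}
Since dimensions strictly decrease, the process terminates. The main obstacle is the bookkeeping in step 3: one needs the product of periodic rational sequences to remain a periodic polynomial sequence, and one needs $\Gamma \cap \ker\psi$ to be a cocompact lattice. Both are standard facts about rational subgroups, and I would cite \cite{leibman-2} for these details rather than reprove them.

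Now the ``in particular'' clause. Let $q$ be a period of $\gamma(n)\Gamma$. Since $\gamma(0) = \id_G$, we have $\gamma(qm)\Gamma = \Gamma$, i.e.\ $\gamma(qm) \in \Gamma$, for every $m$. Hence for $n \in P := \{q, 2q, \dots\}$ (so $d = q$) we get
\[ g(n)\Gamma = g'(n)\gamma(n)\Gamma = g'(n)\Gamma .\]
By total equidistribution of $g'(n)\Gamma$ in $G'/\Gamma'$, the sequence $(g'(qm)\Gamma)_{m \ge 1}$ is equidistributed in $G'/\Gamma'$. In particular it is dense there, and it is contained in $G'/\Gamma'$. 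Therefore $\overline{(g(n)\Gamma)_{n \in P}} = G'/\Gamma'$, as claimed.
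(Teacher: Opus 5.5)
Your proposal is correct and takes the same route as the paper, which also just cites Leibman \cite[Theorem B]{leibman-2} (or \cite[Corollary 1.12]{GT-nil}) for the factorization. Your deduction of the ``in particular'' clause is also right and spells out a step the paper leaves implicit: you take a period $q$ of $\gamma(n)\Gamma$, note that $\gamma(qm)\in\Gamma$ since $\gamma(0)=\id_G$, so $g(qm)\Gamma = g'(qm)\Gamma$ is equidistributed, hence dense, in $G'/\Gamma'$.

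In your optional sketch, there are two things to fix. First, pass to $\ker\tilde\psi$ for an $\R$-valued lift $\tilde\psi$ of $\psi$, choosing $\gamma_0$ with $\tilde\psi(\gamma_0(n)) = \tilde\psi(g(n))$ exactly; the kernel of an $\R/\Z$-valued $\psi$ need not be connected, and with this choice no adjustment via \cref{poly-compact} is needed. Second, the induction must be run for sequences adapted to the induced filtration on $\ker\tilde\psi$, not to its lower central series.
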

\begin{proof}
See \cite[Theorem B]{leibman-2} or, for precisely the statement we have given here, \cite[Corollary 1.12]{GT-nil}.
\end{proof}

The following is the key technical result required for the proof that $S$ is a set of nil-Bohr recurrence.

\begin{proposition} \label{prop32} Suppose that $\alpha_1,\dots, \alpha_m$ have already been chosen. Then there is a choice of $N_m$ such that the following is true uniformly for all $G/\Gamma$ with $r(G/\Gamma) \le m$ and all polynomial sequences $g(n)$ on $G$: $(n\alpha_1,\dots, n\alpha_m, g(n))_{n \le N_m}$ is $(1/m)$-dense\footnote{If $X$ and $Y$ are subsets of a metric space then we say that $X$ is $\eps$-dense in $Y$ if, for every $y \in Y$, there is $x \in X$ with $d(x,y) \le \eps$. We do not require $X \subseteq Y$.} on some subnilmanifold $G'/\Gamma' \le \R^m/\Z^m \times G/\Gamma$ with $\dim G' \ge m$. \end{proposition}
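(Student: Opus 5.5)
The plan is a compactness argument combining \cref{poly-compact} and \cref{leibman-prop}. Fix $m$ and $\alpha_1,\dots,\alpha_m$, and consider one of the finitely many nilmanifolds $G/\Gamma$ with $r(G/\Gamma)\le m$. Form $H := \R^m \times G$ with lattice $\Z^m\times\Gamma$. The sequence $h(n) := (n\alpha_1,\dots,n\alpha_m, g(n))$ is then a polynomial sequence on $H$ with $h(0)=\id_H$ (if $g(0)\neq \id_G$ we first translate, or simply absorb $g(0)$; I will assume $g(0)=\id_G$ as in the intended application, where $g(n)=g^n$).

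By \cref{poly-compact}, we may replace $g$ by $\tilde g$, with the same orbit in $G/\Gamma$ and Taylor coefficients bounded by $O_{G/\Gamma}(1)$. The sequences $h$ to be considered therefore range over a compact set $\mathcal{K}$ of polynomial sequences in the metric $d_{\poly,H}$; the $\R^m$-coordinates are fixed, and the $G$-coefficients lie in a compact subset of $G_1\times\cdots\times G_s$.

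For each $h\in\mathcal{K}$, \cref{leibman-prop} supplies a subnilmanifold $H'_h/\Gamma'_h$ and a progression $P$ with $\overline{(h(n)\Gamma)_{n\in P}}=H'_h/\Gamma'_h$. Compactness of the subnilmanifold gives a finite $N(h)$ such that $(h(n))_{n\le N(h)}$ is $\frac{1}{2m}$-dense in $H'_h/\Gamma'_h$. Continuity of the evaluation maps $h\mapsto h(n)$ for $n\le N(h)$ then gives a neighbourhood $U_h$ of $h$ on which $(h'(n))_{n\le N(h)}$ is $\frac1m$-dense in the \emph{same} subnilmanifold $H'_h/\Gamma'_h$. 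This is exactly why the footnote does not require $X\subseteq Y$. We cover $\mathcal{K}$ by finitely many $U_h$ and take $N_m$ to be the maximum of the corresponding $N(h)$ over all of these, and over all $G/\Gamma$ with $r\le m$. Density for $n\le N(h)$ implies density for $n\le N_m$.

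It remains to check that $\dim H'_h\ge m$. The projection of $H'_h/\Gamma'_h$ to $\R^m/\Z^m$ contains the closure of $\{n\alpha : n\in P\}$, and projection does not increase dimension, so it suffices that this closure is the full torus. This needs $\alpha_1,\dots,\alpha_m$ to be rationally independent together with $1$ (and hence also along any progression $P$). I expect this to be the main point needing care, because it is really a constraint on how the $\alpha_i$ are chosen. I would impose it as one of the constraints specified in \cref{section3}: each $\alpha_{i+1}$ is chosen outside the countable set of values that would create a rational relation with $1,\alpha_1,\dots,\alpha_i$. With this constraint, Weyl's theorem shows that $(n d\alpha)_{n}$ is dense in $\R^m/\Z^m$, the projection of $H'_h$ to $\R^m$ is surjective, and so $\dim H'_h\ge m$.
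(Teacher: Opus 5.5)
Your proposal is correct and follows essentially the same route as the paper. You fix one of the finitely many nilmanifolds, use the bounded-Taylor-coefficient lemma to restrict to a compact family, and apply Leibman's theorem to get a subnilmanifold and a finite $N(g)$ giving $\frac{1}{2m}$-density. Continuity of evaluation then gives $\frac1m$-density in the same subnilmanifold on a neighbourhood, and you finish with a finite subcover. The bound $\dim G' \ge m$ comes, as in the paper, from the rational independence of $1,\alpha_1,\dots,\alpha_m$, which the paper builds into the choice of the $\alpha_i$ rather than stating it in the proposition.
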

\begin{remark} $G'/\Gamma'$ is allowed to depend on $g$. The metric on $\R^m/\Z^m \times G/\Gamma$ is the (sup) product metric of the $\ell^{\infty}$ metric on $\R^m/\Z^m$ and $d_{G/\Gamma}$.\end{remark}
\begin{proof}
By taking a supremum of the resulting $N$'s, it suffices to prove the result for a single fixed nilmanifold $G/\Gamma$. By \cref{poly-compact}, it is enough to prove the result for polynomial sequences $g(n)$ with $d_{\poly, G}(g,\iota) \le C$ for some $C = O_{G/\Gamma}(1)$, where $\iota$ denotes the trivial polynomial sequence with $\iota(n) = \id_G$ for all $n$.

By \cref{leibman-prop}, for each $g$ the orbit $(n\alpha_1,\dots, n\alpha_m, g(n))_{n \in \N}$ is dense in some nilmanifold $G'_g/\Gamma'_g \le \R^m/\Z^m \times G/\Gamma$. Since $1,\alpha_1,\dots,\alpha_m$ are independent over $\Q$, the projection of $G'_g$ to $\R^m$ is surjective, and so $\dim G'_g \ge m$. Choose some finite $N(g)$ such that $(n\alpha_1,\dots, n\alpha_m, g(n))_{n \in N(g)}$ is $(1/2m)$-dense on $G'_g/\Gamma'_g$.

By continuity of the evaluation maps $g \mapsto g(n)$, for each $g$ there is some open neighbourhood $B_g$ of $g$ (in the metric $d_{\poly, G}$ on the space of polynomial sequences) such that $(n\alpha_1,\dots, n\alpha_m, \tilde g(n))_{n \le N(g)}$ is $(1/m)$-dense on $G'_g/\Gamma'_g$ for all $\tilde g \in B_g$. 

Since the ball $\{ g : d_{\poly, G}(g,\iota) \le C\}$ is compact, there is a finite subcover $(B_g)_{g \in X}$. Setting $N := \max_{g \in X} N(g)$, we then see that $(n\alpha_1,\dots, n\alpha_m, g(n))_{n \le N}$ is $(1/m)$-dense in some $G'/\Gamma'$ for every $g$.
\end{proof}

We may now fully describe the choice of the parameters $\alpha_i$ and $N_i$. We initiate the construction by choosing $\alpha_1$ to be some arbitrary element of $[0,1] \setminus \Q$, say $\alpha_1 = 1/\sqrt{2}$. The construction will be done in such a way that $1, \alpha_1,\alpha_2,\dots,\alpha_m$ are independent over $\Q$ and $\alpha_{i+1} < \alpha_i/2$ for all $i$ (in practice $\alpha_{i+1}$ will be vastly smaller). Supposing that $\alpha_1,N_1,\alpha_2,\dots, N_{m-1},\alpha_m$ have already been chosen, we choose $N_m$ as in \cref{prop32}. Then, supposing that $\alpha_1,N_1,\alpha_2,N_2,\dots, \alpha_m, N_m$ have been constructed, choose $\alpha_{m+1}$ so that 
\begin{equation}\label{const-3} \alpha_{m+1} < \frac{\delta}{10} N_m^{-1}.   \end{equation}

Now we turn to the main proof of \cref{mainthm} (1). Fix a nil-Bohr set $B$ as in \cref{nil-Bohr} and set $k := \dim G$, which we assume positive. Without loss of generality we may assume $\eps \le \min(\delta/10, 1/10k)$. Let $m \ge \max(r(G/\Gamma), (10/\delta\eps)^{k+1})$, so in particular $m > 2/\eps$ and $m > k$. Consider the orbit $\mathcal{O} = (n\alpha_1,\dots,n\alpha_m, g^n)_{n \le N_m}$ in $\R^m/\Z^m \times G/\Gamma$. By \cref{prop32}, this is $(1/m)$-dense on some subnilmanifold $G'/\Gamma'$ with $\dim G' \ge m$. Let $H \le \R^m$ be such that $H \times \{\id_G\} = G' \cap (\R^m \times \{\id_G\})$; thus $H$ is simply a vector subspace of $\R^m$. Since $\dim G' \ge m$, $\dim H \ge m - k$. Pick a set $I \subset [m]$ of $k$ indices such that $H + \R^I = \R^m$.

Denote by $e_1,\dots, e_m$ the standard basis vectors of $\R^m$. For each $j \in [m]$ write $\frac{\delta}{2} e_j = h_j + x_j$ where $h_j \in H$ and $x_j \in \R^I \subset \R^m$ (where $\delta = \frac{1}{200}$). By pigeonhole and the lower bound on $m$, there exists $i \in [m]$ and a set $J \subset [m] \setminus \{i\}$, $|J| = 1/\delta$, such that $\Vert x_i - x_j \Vert_{\ell^{\infty}(\R^I/\Z^I)} \le  \delta\eps/2$ for $j \in J$. Then 
\begin{equation}\label{h-def} \frac{1}{2} e_i - \sum_{j \in J} \frac{\delta}{2} e_j  = \sum_{j \in J} (\frac{\delta}{2} e_i - \frac{\delta}{2} e_j) =  h + x\end{equation} with $h = \sum_{j \in J} (h_i - h_j) \in H$ and $x = \sum_{j \in J} (x_i - x_j) \in \R^I$ satisfying $\Vert x \Vert_{\ell^{\infty}(\R^I/\Z^I)} \le \eps/2$.
Now $\mathcal{O}$ is $(1/m)$-dense in $G'/\Gamma'$, so there is $n \le N_m$ such that 
\[ d_{\R^m/\Z^m \times G/\Gamma}\big( (h, \id_G),(\alpha_1 n,\dots, \alpha_m n, g^n) \big) \le 1/m. \]
It follows from the definition \cref{h-def} of $h$ that 
\begin{equation}\label{h-alphas} \Vert \frac{1}{2} e_i - \sum_{j \in J} \frac{\delta}{2} e_j - x - (\alpha_1 n,\dots, \alpha_m n) \Vert_{\infty} \le 1/m\end{equation}
and also that $d_{G/\Gamma}(g^n, \id_{G}) \le 1/m$. Since $m > 1/\eps$, this latter fact implies that $n \in B$.

We will show that \cref{h-alphas} implies that $n \in S$, which then concludes the proof that $S \cap B$ is nonempty. Recall that $x$ is supported on $\R^I$ and that $\Vert x \Vert_{\ell^{\infty}(\R^I/\Z^I)} \le \eps/2$. Therefore from \cref{h-alphas} (and since $m > 2/\eps$ and $\eps < \delta/10$) we have
\begin{equation}\label{first-bd-1} \Vert \alpha_i n - \frac{1}{2}\Vert \le \frac{\eps}{2} + \frac{1}{m} < \eps < \delta.\end{equation} For $j \in J$, we have
\begin{equation}\label{first-bd-2} \Vert \alpha_j n \Vert < \frac{\delta}{2} + \frac{\eps}{2} + \frac{1}{m} < \delta.\end{equation} 
For $u \in I \setminus (J \cup \{i\})$ we have
\begin{equation}\label{first-bd-3} \Vert \alpha_u n \Vert \le \frac{\eps}{2} + \frac{1}{m} \le \eps < \delta.\end{equation}
Finally for all other $u \in [m]$ we have
\begin{equation}\label{first-bd-4} \Vert \alpha_u n \Vert \le \frac{1}{m} < \delta.\end{equation}
The inequality \cref{first-bd-1} is already item (1) in \cref{s-def}. For item (2), note that this follows immediately from \cref{first-bd-2,first-bd-3,first-bd-4} for $j \in [m] \setminus \{i\}$. For $j > m$ we have
\begin{equation}\label{first-bd-5} \Vert \alpha_j n \Vert \le \alpha_{m+1} N_m < \delta\end{equation} by the choice of $\alpha_{m+1}$ (see \cref{const-3}).

Finally we verify item (3) in \cref{s-def}. For this, we use \cref{first-bd-2}, the second inequality in \cref{first-bd-3}, and the first inequality in \cref{first-bd-4} to give
\begin{equation}\label{small-j-2} \sum_{j \le m, j \ne i} \Vert \alpha_j n \Vert \le |J|\delta + k \eps + m \frac{1}{m} \le 3,\end{equation} 
where here we used $|J| = 1/\delta$ and $\eps < \frac{1}{10k}$. For the remaining sum over $j > m$, we upgrade \cref{first-bd-5} to $\Vert \alpha_{m + u} n\Vert \le 2^{1 - u} \delta$ for $u \in \N$, which follows in the same way via use of the fact that $\alpha_{j+1} < \alpha_j/2$. Summing over $j$ gives $\sum_{j > m} \Vert \alpha_j n \Vert \le 2 \delta < \frac{1}{2}$. Combining this with \cref{small-j-2} gives the desired condition (3) in \cref{s-def}, and the proof is complete.
\appendix 
\section{$S$ is a set of recurrence}

It is natural to ask whether the set $S$ defined in \cref{s-def} is a set of (topological) recurrence. Since (a famous question of Katznelson) no example is known of a set of Bohr recurrence which is not also a set of recurrence, it is perhaps unsurprising that the answer is affirmative. ChatGPT 5.6 Pro was able to give the following argument after 67 minutes of thought. It was rewritten, with an appropriate reference supplied, by me.

\begin{proposition}\label{prop41}
Suppose the growth of the sequence $\alpha_1^{-1} < N_1 < \alpha_2^{-1} < N_2 < \cdots$ is sufficiently rapid. Then $S$ is a set of recurrence.
\end{proposition}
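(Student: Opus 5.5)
The plan is to prove \emph{chromatic} recurrence, which for sets of positive integers is equivalent to topological recurrence. That is, given any colouring $c : \N \to [r]$, we will find $x$ and $n \in S$ with $c(x) = c(x+n)$. The construction of $S$ shows that this must be a genuinely topological (Borsuk--Ulam type) argument rather than a clique argument. Indeed, if only the ``exact'' pattern of \cref{s-def} were used (one coordinate exactly $\frac12$, the others exactly $0$), the resulting Cayley graph on $\{0,\frac12\}^m$ is a hypercube, which is bipartite. So the slack $\delta$ in conditions (2) and (3) must be exploited, much as the Borsuk graph on a sphere has large chromatic number while having no large cliques.

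\textbf{Step 1: a model graph on a torus.} Fix $m$, to be chosen large in terms of $r$, and let $\mathcal{G}_m$ be the graph on $\R^m/\Z^m$ in which $u \sim v$ whenever there is some $i$ with
\[
\Vert (u-v)_i - \tfrac12 \Vert \le \delta/2,\qquad \Vert (u-v)_j\Vert \le \delta/2 \ \ (j\neq i),\qquad \sum_{j\neq i}\Vert (u-v)_j\Vert \le 2 .
\]
The main claim is that $\chi(\mathcal{G}_m) > r$ once $m$ is large. Moreover, this is witnessed by a \emph{finite} subgraph lying on a fine grid, and compactness gives this finiteness automatically.

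To prove the claim, I would embed a Borsuk graph into $\mathcal{G}_m$. Take the sphere $S^{r}$ and a piecewise-linear map $\phi : S^{r} \to \R^m/\Z^m$ built from $m$ long ``slow'' coordinate loops. In each coordinate separately we would like antipodal points of $S^{r}$ to differ by $\approx \frac12$. The idea is to split $S^r$ into $m$ thin antipodally symmetric regions. On the $i$th region the $i$th coordinate winds by $\frac12$ from a point to its antipode, while every other coordinate changes by at most $\delta/2$, with total $\ell^1$ change bounded. With $m \gg r/\delta$ there is room to do this while keeping conditions (2) and (3) on differences, and the bound of $4$ in (3) is what allows finitely many small contributions to accumulate.

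With $\phi$ in hand, Lovász's topological bound (equivalently Borsuk--Ulam, which I take as the ``appropriate reference'') shows that any $r$-colouring of a sufficiently fine finite antipodal net on $S^r$ has a monochromatic pair of nearly antipodal points. Their images under $\phi$ are adjacent in $\mathcal{G}_m$. I expect the construction of $\phi$ to be the main obstacle. The delicate part is getting exactly one coordinate near $\frac12$ uniformly over the sphere, with the $\ell^1$ budget respected, and I would try first an explicit cross-polytope construction using the $\ell^1$ sphere in $\R^{r+1}$, subdivided across many coordinates.

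\textbf{Step 2: transfer to $\N$.} Let $F \subset \R^m/\Z^m$ be the finite non-$r$-colourable subgraph from Step 1. Since $1,\alpha_1,\dots,\alpha_m$ are $\Q$-independent, Weyl's theorem allows us to choose, once $N_m$ is large enough (this is the ``sufficiently rapid growth'' hypothesis of \cref{prop41}), integers $x_v \le N_m$ for $v \in F$ with $\Vert \alpha_j x_v - v_j\Vert \le \delta/(100 m)$ for all $j \le m$. Colour $v$ by $c(x_v)$. By Step 1 some adjacent pair $u \sim v$ gets the same colour.

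Then $n = |x_u - x_v| \le N_m$ satisfies conditions (1) and (2) of \cref{s-def} for $j \le m$, with the tolerance loosened from $\delta/2$ to $\delta$. For $j > m$ we argue exactly as in \cref{first-bd-5} and the subsequent estimate: $\Vert \alpha_{m+u} n\Vert \le 2^{1-u}\delta$, which contributes at most $2\delta$ to the sum in (3). Hence $n \in S$, and $x_u, x_v$ are two integers of the same colour differing by an element of $S$.

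Since $r$ is arbitrary, $S$ is chromatically recurrent, and hence a set of recurrence.
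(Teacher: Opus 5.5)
Your Step 2 is sound and is essentially how the paper finishes: approximate finitely many torus points by $n\le N_m$ via Kronecker, pull back the colouring, and control $j>m$ as in \cref{first-bd-5}. The gap is in Step 1. The claim $\chi(\mathcal{G}_m)>r$ is true, but the map $\phi$ you propose to prove it with cannot exist for $r\ge 2$, so the construction is not merely delicate; it is a dead end.

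Write $D=\bigcup_i D_i$ for the set of differences your adjacency rule allows, where $D_i$ consists of those whose $i$th coordinate is within $\delta/2$ of $\frac12$. The $D_i$ are closed and pairwise disjoint, since a point of $D_i\cap D_j$ would have $i$th coordinate within $\delta/2$ of both $0$ and $\frac12$. Your argument needs $\phi$ continuous, with nearly antipodal points having adjacent images. In particular $p\mapsto\phi(-p)-\phi(p)$ is a continuous map from the connected space $S^r$ into $D$ (or into a thin neighbourhood of $D$ if you only impose adjacency on a net, which changes nothing). Hence it lands in a single $D_i$, and the winding coordinate cannot change from region to region as in your picture. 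Worse, for $r\ge 2$ the sphere is simply connected, so $\phi_i$ lifts to $\tilde\phi_i:S^r\to\R$. The continuous odd function $p\mapsto\tilde\phi_i(-p)-\tilde\phi_i(p)$ then vanishes somewhere, contradicting $\Vert\phi_i(-p)-\phi_i(p)-\frac12\Vert\le\delta/2$.

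Only $S^1$ survives, which gives odd cycles and $\chi\ge 3$, nothing more. Replacing $\phi_i$ by $\phi_i+\frac12 e$, where $e$ is the small error in the half-turn, gives an equivariant map to $S^1$ from any free $\Z/2$-space mapped continuously in this way. So no Borsuk--Ulam argument of this shape can certify more than three colours. A discontinuous $\phi$ would discard the topology altogether.

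What is missing is a combinatorial source of large chromatic number. You are right that no clique argument can work; indeed $\mathcal{G}_m$ is triangle-free. But triangle-free graphs of large chromatic number need no topology. The paper uses the shift graph $G_{m,k}$, with vertices $1\le a_1<\dots<a_k\le m$ and edges $(a_1,\dots,a_k)\sim(a_2,\dots,a_{k+1})$. Its chromatic number tends to infinity with $m$ by Erd\H{o}s--Hajnal. Take $k=\lceil 10/\delta\rceil$ and the map $a\mapsto x(a)$ given by $x(a)_{a_j}=j/2k$, with all other coordinates $0$. For adjacent $a,a'$, the difference $x(a')-x(a)$ has coordinate $a_{k+1}$ equal to $\frac12$, coordinates $a_1,\dots,a_k$ equal to $-\frac{1}{2k}$, and all others $0$. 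The half-turn is paid for by $k$ tiny steps of total $\ell^1$-mass $\frac12$, which is exactly how the slack in conditions (2) and (3) gets used. This is a graph homomorphism $G_{m,k}\to\mathcal{G}_m$, which proves your Step 1 claim. Feeding it into your Step 2 recovers the paper's proof.
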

\begin{proof}
Recall that the \emph{shift graph} $G_{m,k}$ is defined as follows. The vertices of $G_{m,k}$ are tuples $(a_1,\dots, a_k)$ with $1 \le a_1 < \cdots < a_k \le m$, and the edges are between vertices $(a_1,\dots, a_k)$ and $(a_2,\dots, a_{k+1})$. It was shown by Erd\H{o}s and Hajnal \cite{erdos-hajnal} that the chromatic number $\chi(G_{m,k})$ tends to infinity as $m \rightarrow \infty$, for fixed $k$.

Set $k := \lceil 10/\delta\rceil$. Given a vertex $a = (a_1,\dots, a_k) \in G_{m,k}$, first associate the element $x(a) \in \R^m/\Z^m$ defined by $x(a)_{a_j} = j/2k$, $j = 1,\dots, k$ and all other coordinates of $x(a)$ are zero. Next, pick some $n(a) \in [N_m]$ such that the $\ell^1$ distance between $n(a) \cdot (\alpha_1,\dots,\alpha_m)$ and $x(a)$ (on the torus) is at most $\delta^2$. By Kronecker's theorem, this will be possible if $N_m$ is large enough.

Now suppose that $\N$ is $r$-coloured. Pick $m$ is large enough that $\chi(G_{m,k}) > r$, and consider the induced colouring on $G_{m,k}$ given by pulling back the colouring of $\N$ under the map $a \mapsto n(a)$. Pick $a = (a_1,\dots, a_k)$ and $a' = (a_2,\cdots, a_{k+1})$ with the same colour, and set $n := |n(a') - n(a)|$. We claim that $n \in S$, which will establish the proposition.

To see this, observe that $n \cdot(\alpha_1,\dots, \alpha_m)$ is within $2\delta^2$ (in $\ell^1$ of $\R^m/\Z^m$) of $z := x(a') - x(a)$. By construction, $z$ has $a_{k+1}$-coordinate equal to $\frac{k+1}{2k}$, all $a_j$-coordinates of size $\le 1/2k$ ($j = 1,\dots, k$), and all other coordinates zero. We then have
\[ \Vert \alpha_{a_{k+1}} n - \frac{1}{2} \Vert \le 2\delta^2 + \frac{1}{2k} < \delta,\]
and \[ \Vert \alpha_{a_j} n \Vert \le 2\delta^2 + \frac{1}{2k} < \delta\] for $j \in \{1,\dots, k\}$. Moreover
\[ \sum_{u \notin \{a_1,\dots, a_{k+1}\}} \Vert \alpha_u n \Vert \le 2\delta^2,\] and so from the preceding inequalities we have
\[ \sum_{u = 1}^m \Vert \alpha_u n\Vert \le \frac{1}{2} + \delta + k (2\delta^2 + \frac{1}{2k}) + 2\delta^2 < 2.\] 
Since $n \le N_m$, if $\alpha_{m+1},\alpha_{m+2},\dots$ are sufficiently small then it follows that 
\[ \sum_{u = 1}^{\infty} \Vert \alpha_u n \Vert < 4.\]
We have verified the required properties (1), (2) and (3) of \cref{s-def}.
\end{proof}

After a couple of days thought I was not able to decide whether or not $S$ is a set of \emph{measurable} recurrence. I also did not succeed in prompting an LLM to answer this question, which therefore remains open.

\bibliographystyle{amsplain0}
\bibliography{main.bib}

\end{document}